 \newcommand{\oO}{\mathcal O}
 \newcommand{\p}{\varphi}
\newcommand\R{{\mathbf{R}}}
\newcommand\V{\mathcal{V}}
\newcommand\M{{\mathcal{M}}}
\newcommand\Z{{\mathbf{Z}}}
\newcommand\Q{{\mathbf{Q}}}
\newcommand\G{\mathbf{G}}
\newcommand\GN{\mathbf{G}_{N}}
\newcommand\T{\mathbf{T}}
\newcommand\bxi{\bm{\xi}}
\newcommand\bzero{\mathbf{0}}
\theoremstyle{plain}
  \newtheorem{theorem}{Theorem}
  \newtheorem{proposition}{Proposition}
\theoremstyle{remark}
  \newtheorem{remark}[subsection]{Remark}
\theoremstyle{definition}
\begin{document}

\title{Strongly badly approximable matrices in fields of power series}
\author{Th\'ai Ho\`ang L\^e and Jeffrey D. Vaaler}

\keywords{linear forms, fractional part, Littlewood conjecture}
\thanks{The research of the second author was supported by NSA grant, H98230-12-1-0254.}

\address{Department of Mathematics
The University of Texas at Austin,
1 University Station, C1200,
Austin, Texas 78712}
\email{leth@math.utexas.edu}
\email{vaaler@math.utexas.edu}

\begin{abstract}
We study the notion of strongly badly approximable matrices in the field of power series over a field $K$. We prove a transference principle in this setting, 
and show that such matrices exist when $K$ is infinite.
\end{abstract}

\maketitle

\section{Introduction}Let $K$ be a field and let $L = K((x^{-1}))$ be the field of formal Laurent series with coefficients in $K$.  That is, 
the nonzero elements of $L$ consist of all formal Laurent series
\begin{equation}\label{comp51}
F(x) = \sum_{m = -\infty}^M a(m) x^m,
\end{equation}
where each coefficient $a(m)$ is in $K$, $a(M) \not= 0_K$, and $M$ is an arbitrary integer.  We write $0_L$ for the
identically zero Laurent series, which is also in $L$.  Addition and multiplication in $L$ are defined in the obvious way.  
Then we define $|\ |:L \rightarrow [0, \infty)$ by $|0_L| = 0$, and
\begin{equation}\label{comp52}
|F| = e^M,
\end{equation}
where $F(x) \not= 0_L$ in $L$ is given by (\ref{comp51}).  It follows that $|\ |:L \rightarrow [0, \infty)$ is a 
discrete, non-archimedean absolute value on $L$, and the resulting metric space $\bigl(L, |\ |\bigr)$ is complete.  In 
this situation we define the subring of $L$-adic integers 
\begin{equation*}\label{comp53}
\oO_L = \big\{F \in L : |F| \le 1\big\}, 
\end{equation*}
and its unique maximal ideal
\begin{equation*}\label{comp54}
\M_L = \big\{F \in \oO_L : |F| < 1\big\}.
\end{equation*}
Clearly the residue class field $\oO_L/\M_L$ is isomorphic to $K$.  We recall that $\oO_L$ is compact if and only if the 
residue class field $\oO_L/\M_L$ is finite (see \cite[Chapter 4, Corollary to Lemma 1.5]{cassels1986}.)  Obviously 
$\M_L \subseteq \oO_L$ is the principal ideal generated by the element $x^{-1}$.

It is clear that the polynomial ring $K[x]$ can be embedded in $L$ by simply regarding a polynomial
\begin{equation*}\label{comp55}
P(x) = \sum_{m = 0}^M \xi(m) x^m
\end{equation*}
as a Laurent series with $\xi(m) = 0_K$ for integers $m \le -1$.  In what follows we will always identify $K[x]$ with
its image in $L$.  Let $\p: L \rightarrow \M_L$ be the map defined by $\p(0_L) = 0_L$, and defined on nonzero elements $F$ in $L$ by
\begin{equation*}\label{comp56}
\p(F) = \p\biggl( \sum_{m = -\infty}^M a(m) x^m\biggr) =  \sum_{m = -\infty}^{-1} a(m) x^m.
\end{equation*}
It is easy to check that $\p$ is a surjective homomorphism of additive groups, and
\begin{equation}\label{comp57}
\ker\{\p\} = K[x].
\end{equation}
We put $\T = L/K[x]$. Then $\p$ induces an isomorphism of additive groups
\begin{equation}\label{comp58}
\overline{\p} : \T \rightarrow \M_L.
\end{equation}

The subset $K[x] \subseteq L$ is clearly discrete with respect to the metric topology induced by the absolute value $|\ |$.  In
particular, if $P(x) \not= Q(x)$ are polynomials in $K[x]$, then we have
\begin{equation}\label{comp59} 
1 \le |P - Q|.
\end{equation}
Next we define $\|\ \|:\T \rightarrow [0, 1)$ on the additive group $\T$ by
\begin{equation*}\label{comp60}
\|F\| = \min\big\{|F - P|: P \in K[x]\big\}.
\end{equation*}
Alternatively, $\|F\|$ is the distance in $L$ from $F$ to the nearest polynomial.
It is trivial to check that $\|\ \|:\T \rightarrow [0, 1)$ is a norm on $\T$ in the sense of Kaplansky \cite[Appendix 1]{kaplansky1977}.
Therefore the map
\begin{equation*}\label{comp61}
(F, G) \mapsto \|F - G\|
\end{equation*}
defines a metric in $\T$, and so induces a metric topology in this quotient group.

There is a natural $K[x]$-module structure on the additive group $\T$.  If $\bigl(P(x), F(x) + K[x]\bigr)$ is an element 
of $K[x] \times \T$ we define the product
\begin{equation}\label{comp70}
P(x)\bigl(F(x) + K[x]\bigr) = P(x)F(x) + K[x].
\end{equation}
Again it is easy to check that (\ref{comp70}) does give the additive group $\T$ the structure of a $K[x]$-module. 

We can formulate Diophantine approximation problems in this setting, with $K[x], K(x), L$ and $\T$ playing analogous roles to $\Z, \Q, \R$ and the unit interval $[0,1)$, respectively. 
Indeed, Davenport and Lewis \cite{dl} studied the analog of Littlewood's conjecture in 
$L$ and proved that this analog is actually \textit{false} when $K$ is infinite. Explicit counterexamples were later given by Baker \cite{baker}.
In the case where $K$ is finite, the analog of the Littlewood conjecture in $L$ is believed to be true, but still remains an open problem (see \cite{ab}). 

In \cite{lv}, motivated by inequalties regarding fractional parts of linear forms, we introduced the notion of strongly badly approximable matrices, 
which is a strengthening of the usual notion of badly approximable matrices.
Let $A=(\alpha_{mn})$ be an $M \times N$ matrix with entries in $\R/\Z$. Recall that $A$ is \textit{badly approximable} 
if there exists a constant $\beta(A)>0$ such that we have
\begin{equation}
 \beta(A) \leq \left( \max_{1\leq m \leq M} \left\| \sum_{n=1}^N \alpha_{mn} \xi_n \right\| \right)^{M} \left( \max_{1 \leq n \leq N} |\xi_n| \right)^N
\end{equation}
for all $\bxi \in \Z^N \setminus \{ \bzero \}$. Here $\| \alpha \|$ denotes the distance from $\alpha$ to the nearest integer.
We say that $A$ is \textit{strongly badly approximable} if there exists a constant $\gamma(A)>0$ such that we have
\begin{equation}
 \gamma(A) \leq \left( \prod_{m=1}^{M} \left\| \sum_{n=1}^N \alpha_{mn} \xi_n \right\| \right) \left( \prod_{n=1}^{N} (|\xi_n|+1) \right)
\end{equation}
for all $\bxi \in \Z^N \setminus \{ \bzero \}$. Clearly if $A$ is strongly badly approximable, then it is badly approximable. If $M=N=1, A=(\alpha)$, then $A$ is strongly badly approximable if 
$\alpha$ is a badly approximable number. If $M=2, N=1, A=( \alpha \, \beta)$ then $A$ is strongly badly approximable if and only if there is a constant $\gamma>0$ such that
\[
 \gamma \leq \| k \alpha\| \| k \beta \| (|k|+1)
\]
for all $k \in \Z \setminus \{ 0\}$. That is, $(\alpha, \beta)$ is a counterexample to Littlewood's conjecture. In \cite{lv}, using the geometry of numbers, 
we obtained a \textit{transference principle} for strongly badly approximable matrices, namely that $A$ is strongly badly approximable if and only 
if its transpose $A^t$ is. 

In this note, we consider the notion of strongly badly approximable matrices in $L$. 
We prove an explicit criterion for strongly badly approximable matrices in this setting, from which the transference principle readily follows. 
We also show that, when $K$ is infinite, then strongly badly approximable matrices \textit{do exist}.

Consider an $M \times N$ matrix $A=(\alpha_{mn})$ with entries in $\T$. We say that $A$ is \textit{strongly badly approximable} 
if there exists a constant $C_1=C_1(A)>0$ such that 
\begin{equation} \label{eq:def}
 e^{-C_1} < \left( \prod_{m=1}^{M} \left\| \sum_{n=1}^N \alpha_{mn} \xi_n \right\| \right) \left( \prod_{n=1}^{N} \max(|\xi_n|,1) \right)
\end{equation}
for all $\bxi=(\xi_1, \ldots, \xi_N) \in K[x]^N \setminus \{ \bzero \}$. Clearly, $C_1$ is necessarily greater than $M$.

Let us denote the set of all elements $\xi \in K(x)$ with $|\xi| < e^{N}$ by $\GN$. Given nonnegative integers $U_1, \ldots, U_N$, and
$V_1, \ldots, V_M$, let us consider the set of all $\bxi=(\xi_1, \ldots, \xi_N) \in K[x]^N$ satisfying the conditions
\begin{align}
 |\xi_n| & < e^{U_n} \, \textrm{ for any } n=1, \ldots, N, \label{eq:cond1}\\
\left\| \sum_{n=1}^N \alpha_{mn} \xi_n \right\| & < e^{-V_m} \, \textrm{for any } m=1, \ldots, M.  \label{eq:cond2}
\end{align}
Clearly, the solutions to the systems (\ref{eq:cond1}) and (\ref{eq:cond2}) form a subvector space of 
\begin{equation*}
\G_{U_1} \times \cdots \times \G_{U_N} \subset K[x]^N,
\end{equation*}
which we denote by 
$\V(U_1, \ldots, U_N; V_1, \ldots, V_M)$. Furthermore, the condition (\ref{eq:cond2}) determines $V_1 + \cdots + V_M$ equations (whose unknowns are the coefficients of $\xi_1, \ldots, \xi_N$). Thus we immediately have the inequality
\begin{equation} \label{eq:dim1}
 \dim \V(U_1, \ldots, U_N; V_1, \ldots, V_M) \geq \max(0, U_1+\cdots + U_N - V_1 -\cdots - V_M).
\end{equation}
One may regard this as a generalization of Dirichlet's theorem.

\section{A first characterization}
We now prove our first characterization of strongly badly approximable matrices in $\T$.

\begin{proposition} \label{prop:1}
Let $A = (\alpha_{mn})$ be an $M \times N$ matrix with entries in $\T$, where $m = 1, 2, \dots , M$ indexes rows and $n = 1, 2, \dots , N$
indexes columns.  Then $A$ is strongly badly approximable if and only if 
there exists a constant $C_2 \geq 0$ such that for any nonnegative integers $U_1, \ldots, U_N$ and $V_1 \ldots, V_M$, we have
\begin{equation} \label{eq:dim2}
 \dim \V(U_1, \ldots, U_N; V_1, \ldots, V_M) \leq \max(0, U_1+\cdots + U_N - V_1 -\cdots - V_M) + C_2.
\end{equation}
\end{proposition}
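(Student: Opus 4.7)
The plan is to exploit two features of the spaces $\V(U_1,\ldots,U_N;V_1,\ldots,V_M)$: first, that $\dim \V$ is monotone in the parameters (increasing a single $V_m$ by one adds at most one linear condition, and likewise decreasing a single $U_n$ by one), and second, that $\V$ is stable in a quantitative way under multiplication by powers of $x$ (since $\|xF\|\leq e\|F\|$ on $\T$). The forward direction uses only the first, while the converse requires both.

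For the forward direction, I would let $d = \dim\V(\vec U;\vec V) \geq 1$ and inflate $V_1$ to $V_1 + d - 1$, losing at most $d - 1$ dimensions along the way, so that $\V(\vec U;V_1+d-1,V_2,\ldots,V_M)$ still contains a nonzero $\bxi$. Since $\|\,\cdot\,\|$ takes values in $\{0\}\cup\{e^{-j}\}_{j\geq 1}$, the strict inequalities $\|\sum_n\alpha_{mn}\xi_n\| < e^{-V'_m}$ upgrade to $\|\sum_n\alpha_{mn}\xi_n\| \leq e^{-V'_m - 1}$; moreover, each such norm is nonzero, since otherwise the product in (\ref{eq:def}) vanishes, contradicting SBA outright. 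Combining with $\max(|\xi_n|,1) \leq e^{U_n}$ and invoking (\ref{eq:def}) gives $e^{-C_1} < e^{\sum_n U_n - \sum_m V'_m - M}$, and substituting $\sum V'_m = \sum V_m + d - 1$ rearranges to $d \leq \sum_n U_n - \sum_m V_m + C_1 - M$. Thus $C_2 = C_1 - M$ works.

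For the converse, given a nonzero $\bxi \in K[x]^N$, I would consider the $K$-linearly independent family $\bxi, x\bxi, \ldots, x^{k-1}\bxi$, which all lie in an enlarged space $\V(\vec U'; \vec V')$ where $U'_n$ accommodates the degree of $x^{k-1}\xi_n$ and $V'_m = \max(0, j_m - k)$ with $j_m = -\log \|\sum_n \alpha_{mn}\xi_n\|$ (using $\|x^j F\| \leq e^j \|F\|$). A preliminary step, ensuring $j_m$ is finite, is to show that $\sum_n \alpha_{mn}\xi_n \neq 0$ in $\T$ for every nonzero $\bxi$ and each $m$; otherwise, one could take the corresponding $V'_m$ arbitrarily large while preserving $\dim \geq k$, contradicting the dim bound as soon as $k > C_2$. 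Given this, choose any integer $k > C_2$, say $k_0 = \lfloor C_2\rfloor + 1$. The dim bound then forces $\sum U'_n - \sum V'_m \geq k_0 - C_2 \geq 1$. A direct computation shows $\sum U'_n - \sum V'_m = \log T + N^{*} k_0 + \sum_m \min(j_m,k_0)$, where $T$ denotes the product appearing on the right-hand side of (\ref{eq:def}) and $N^{*} = \#\{n : \xi_n \neq 0\}$; using $N^{*} \leq N$ and $\min(j_m, k_0) \leq k_0$ gives $\log T \geq k_0(1 - N - M) - C_2$, a finite lower bound, so SBA holds with $C_1$ at most something like $(C_2 + 1)(N + M)$.

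The main technical obstacle is the converse. One must carry out the shift-and-count bookkeeping precisely, handling the truncation $V'_m = \max(0, j_m - k)$ and the possibility of vanishing components of $\bxi$, in order to obtain the identity $\sum U'_n - \sum V'_m = \log T + N^{*} k + \sum_m \min(j_m, k)$. The preliminary non-vanishing lemma, that $\sum_n \alpha_{mn}\xi_n \notin K[x]$ for $\bxi \neq \bzero$, is not a side issue but a genuine consequence of the dim bound, proved by the same shift-amplification trick; this already indicates that the dim bound is well-suited to capturing the strong approximability of $A$.
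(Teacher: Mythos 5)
Your proof is correct, and it reaches the same constants up to trivialities, but the packaging differs from the paper's in both directions, so a comparison is worth recording. In the forward direction the paper first proves a vanishing statement, $\V(W_1, \ldots, W_N; V_1, \ldots, V_M) = \{\bzero\}$ whenever $W_1 + \cdots + W_N \le V_1 + \cdots + V_M + M - C_1$, and then deduces (\ref{eq:dim2}) by intersecting $\V(U_1, \ldots, U_N; V_1, \ldots, V_M)$ with a coordinate box $\G_{W_1} \times \cdots \times \G_{W_N}$ inside $\G_{U_1} \times \cdots \times \G_{U_N}$; your inflation of $V_1$ by $d-1$ combined with the monotonicity of $\dim \V$ under adding one linear condition is a more direct route to the same place (one pedantic caveat: from $d < \sum_n U_n - \sum_m V_m - M + 1 + C_1$ you may only conclude $d \le \sum_n U_n - \sum_m V_m - M + C_1$ when $C_1$ is an integer; otherwise take $C_2 = C_1 - M + 1$, which is immaterial since only the existence of some constant is claimed). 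In the converse direction the amplification device is the same as the paper's --- the paper places $f\bxi$ for all $f \in \G_W$ into $\V\bigl(W+U_1, \ldots, W+U_N; \max(0, V_1 - W), \ldots, \max(0, V_M - W)\bigr)$, which is exactly the span of your family $x^j \bxi$, $0 \le j \le k-1$, and your preliminary non-vanishing step ($\sum_n \alpha_{mn}\xi_n \notin K[x]$) is argued the same way in the paper --- but the extraction of the final bound differs: the paper takes $W$ minimal with $\sum_n (W+U_n) - \sum_m \max(0, V_m - W) \ge 0$ and shows $W < (M+N) + C_2$, whereas you fix $k_0 = \lfloor C_2 \rfloor + 1$ in advance and use the exact bookkeeping identity $\sum_n U_n' - \sum_m V_m' = \log T + N^{*} k_0 + \sum_m \min(j_m, k_0)$ (with $U_n' = U_n + k_0$ for $\xi_n \ne 0$ and $U_n' = 0$ otherwise), which I checked and which is correct, including the treatment of vanishing components and the discreteness upgrade from strict inequalities to $\le e^{-V-1}$. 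Your version buys a marginally better constant, roughly $C_1 \le (M+N)(C_2+1)$ against the paper's $(M+N)^2 + (M+N)C_2$, at the cost of carrying the identity explicitly; the paper's minimal-$W$ trick avoids that bookkeeping but is less transparent about where the loss occurs.
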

In other words, $A$ is strongly badly approximable if and only if the inequality (\ref{eq:dim1}) is essentially best possible. One could regard Proposition \ref{prop:1} as an analog of \cite[Lemma 4.2]{lv}.

\begin{proof}
Suppose $A$ is strongly badly approximable, that is, there is a constant $C_1$ such that (\ref{eq:def}) holds for any $\bxi \in K[x]^N \setminus \{ \bzero \}$.
We first claim that 
\begin{equation*}
\V(W_1, \ldots, W_N; V_1, \ldots, V_M) = \{\bzero\}
\end{equation*}
whenever 
\begin{equation*}
W_1 + \cdots + W_N \leq V_1 + \cdots + V_M + M - C_1.
\end{equation*}

Indeed, suppose there is $\bxi \in \V(W_1, \ldots, W_N; V_1, \ldots, V_M), \bxi \neq \bzero$. By definition of \\
$\V(W_1, \ldots, W_N; V_1, \ldots, V_M)$, we have
\begin{align}
 \max \left( |\xi_n| ,1 \right) & \leq e^{W_n} \, \textrm{ for any } n=1, \ldots, N \\
\left\| \sum_{n=1}^N \alpha_{mn} \xi_n \right\| & \leq e^{-V_m -1} \, \textrm{for any } m=1, \ldots, M.
\end{align}
Therefore, 
\begin{equation} \label{eq:prod}
 \left( \prod_{m=1}^{M} \left\| \sum_{n=1}^N \alpha_{mn} \xi_n \right\| \right) \left( \prod_{n=1}^{N} \max(|\xi_n|,1) \right) \leq e^{W_1 + \cdots + W_N-V_1 -\cdots - V_M -M}
\end{equation}
This contradicts (\ref{eq:def}) if $W_1 + \cdots + W_N \leq V_1 + \cdots + V_M + M -C_1 $.

We now show that (\ref{eq:dim2}) holds for 
\begin{equation} \label{eq:c1}
C_2:=C_1-M 
\end{equation}
(note that $C_2 >0$). If $U_1 + \cdots + U_N \leq V_1 + \cdots + V_M - C_2$, then (\ref{eq:dim2}) is automatically true by what we just proved. Suppose
\[
U_1 + \cdots + U_N \geq V_1 + \cdots + V_M - C_2.
\]
We can find integers $0 \leq W_n \leq U_n$ such that 
\begin{equation*}
W_1 + \cdots + W_N =  \max( V_1 + \cdots + V_M - C_2, 0).
\end{equation*}
Then $\G_{W_1} \times \cdots \times \G_{W_N}$ and $\V(U_1, \ldots, U_N; V_1, \ldots, V_M)$ are two subvector spaces of $\G_{U_1} \times \cdots \times \G_{U_N}$, whose intersection is $\V(W_1, \ldots, W_N; V_1, \ldots, V_M) = \{\bzero \}$. It follows that
\begin{align*}
\dim \V(U_1, \ldots, U_N; V_1, \ldots, V_M) & \leq U_1 + \cdots + U_N - (W_1 + \cdots + W_N) \\
 & = U_1 + \cdots + U_N - \max( V_1 + \cdots + V_M -C_2, 0)\\
 & \leq U_1+\cdots + U_N - V_1 -\cdots - V_M + C_2\\
 & \leq \max(0, U_1+\cdots + U_N - V_1 -\cdots - V_M) + C_2
\end{align*}
as desired.

For the reverse direction, let us assume that (\ref{eq:dim2}) holds for some constant $C_2$. Let $\bxi$ be an arbitrary element in $K[x]^N \setminus \{ \bzero \}$. 
Our goal is to find a lower bound for
\[
P =\left( \prod_{m=1}^{M} \left\| \sum_{n=1}^N \alpha_{mn} \xi_n \right\| \right) \left( \prod_{n=1}^{N} \max(|\xi_n|,1) \right).
\]
Put $\max(|\xi_n|,1) = e^{U_n}$ for $n=1, \ldots, N$. First we observe that $\left\| \sum_{n=1}^N \alpha_{mn} \xi_n \right\| \neq 0$ for any $m=1, \ldots, M$. 
Indeed, suppose for a contradiction that $\left\| \sum_{n=1}^N \alpha_{1n} \xi_n \right\| = 0$. 
Let $W$ be any integer greater than $C_2$. Then for any $V \geq 0$, the vector space $\V(U_1 + W+1, \ldots, U_N+W+1; V, 0, \ldots, 0)$ has dimension at least $W$ (since it contains $f \bxi$, for any $f \in \G_{W}$). On the other hand, in view of (\ref{eq:dim2}), its dimension cannot exceed $C_2$ if $V$ is sufficiently large.

Let us put $\left\| \sum_{n=1}^N \alpha_{mn} \xi_n \right\|=e^{-V_m}$, where $V_m$ are nonnegative integers. Then we want to find a lower bound for
\begin{equation}\label{jeff1}
\sum_{n=1}^{N} U_n - \sum_{m=1}^{M} V_m . 
\end{equation}
We can assume that (\ref{jeff1}) is negative, otherwise, we are already done.  Let $W$ be the smallest integer such that
\begin{equation} \label{eq:w}
\sum_{n=1}^{N} (W + U_n) - \sum_{m=1}^{M} \max( 0, V_m - W) \geq 0.
\end{equation}
Clearly $W$ exists, and $W \geq 1$. From (\ref{eq:w}) we immediately have
\begin{equation} \label{eq:w2}
\sum_{n=1}^{N} U_n - \sum_{m=1}^{M} V_m \geq - (M+N)W.
\end{equation}
By minimality of $W$, we have
\[
\sum_{n=1}^{N} (W-1 + U_n) - \sum_{m=1}^{M} \max( 0, V_m - W +1) < 0.
\]
Upon observing that $\max( 0, V_m - W +1) \leq \max( 0, V_m - W) + 1$, we have 
\[
\sum_{n=1}^{N} (W + U_n) - \sum_{m=1}^{M} \max( 0, V_m - W) < M+N.
\]
Let us now consider the space 
\begin{equation*}
\V (W+U_1, \ldots, W+ U_N; \max(0, V_1-W), \ldots, \max(0, V_M-W)).
\end{equation*}
On the one hand, by hypothesis, its dimension is at most
\[
\max \left( 0, \sum_{n=1}^{N} (W + U_n) - \sum_{m=1}^{M} \max( 0, V_m - W) \right)+ C_2 < (M+N) + C_2.
\]
On the other hand, its dimension is at least $W$, since it contains $f \bxi$ for any $f \in \G_{W}$. Therefore,
\begin{equation} \label{eq:w3}
W<(M+N) + C_2.
\end{equation}
Combining (\ref{eq:w2}) and (\ref{eq:w3}), we see that (\ref{eq:def}) holds for 
\begin{equation} \label{eq:c2}
C_1 : = (M+N)^2 + (M+N)C_2.
\end{equation}
\end{proof}
\begin{remark}One can contrast the values of $C_1$ and $C_2$ given by (\ref{eq:c1}) and (\ref{eq:c2}). It is an interesting problem to determine if these values are tight.
\end{remark}
\section{A matrix interpretation}
The pleasantness of working in $L$ is that we can write out any element of $L$ in terms of its coordinates 
and express Diophantine inequalities in terms of linear equations.
For any element $\alpha=\sum_{i=1}^{\infty} a_{i} x^{-i} \in \T$ and nonnegative integers $U, V$, let us denote by $\M_{U,V}(\alpha)$ the matrix
\begin{equation}
\M_{U;V}(\alpha)=
\begin{pmatrix}
a_1 	& a_2 & \cdots & a_{U}\\
a_2 	& a_3 & \cdots & a_{U+1} \\
\vdots  & \vdots & \ddots & \vdots  \\     
a_{V} 	& a_{V+1}& \cdots & a_{U+V-1} 
\end{pmatrix}.
\end{equation}
In particular, when $U=0$ or $V=0$, $\M_{U;V}(\alpha)$ is the empty matrix.

Given nonnegative integers $U_1, \ldots, U_N, V_1, \ldots, V_M$ and an $M \times N$ matrix $A=(\alpha_{mn})$ with entries in $\T$, 
the conditions (\ref{eq:cond1}) and (\ref{eq:cond2}) represent a system of $V_1 + \cdots + V_M$ linear equations in $U_1 + \cdots + U_N$ variables, 
which can be written out explicitly as follows. Suppose that
\begin{equation} \label{eq:alphamn}
 \alpha_{mn} = \sum_{i=1}^{\infty} a^{(mn)}_{i} x^{-i} \qquad (1 \leq m \leq M, 1 \leq n \leq N).
\end{equation}
Let us write
\begin{equation} \label{eq:xin}
 \xi_{n} = \sum_{j=0} ^{U_n -1} t^{(n)}_j x^j \qquad (1 \leq n \leq N)
\end{equation}
where we regard the $t^{(n)}_j$ as variables. Then the conditions (\ref{eq:cond1}),( \ref{eq:cond2}) amount to the system
\[
 \sum_{n=1}^N \sum_{j=0} ^{U_n -1} t^{(n)}_j a^{(mn)}_{i+j} = 0
\]
for any $i=1, \ldots V_m$ and $m=1, \ldots, M$. It is straightforward to see that the matrix of this system is
\begin{equation} \label{eq:m}
\M_{U_1, \ldots, U_N;V_1, \ldots, V_M}(A)=
\begin{pmatrix}
\M_{U_1;V_1}(\alpha_{11})	& \M_{U_2;V_1}(\alpha_{12}) & \cdots & \M_{U_N;V_1}(\alpha_{1N})\\
\M_{U_1;V_2}(\alpha_{21})	& \M_{U_2;V_1}(\alpha_{22}) & \cdots & \M_{U_N;V_1}(\alpha_{2N}) \\
\vdots  & \vdots & \ddots & \vdots  \\     
\M_{U_1;V_M}(\alpha_{M1})	& \M_{U_2;V_M}(\alpha_{M2}) & \cdots & \M_{U_N;V_M}(\alpha_{MN}) 
\end{pmatrix}.
\end{equation}

We thus arrive at another characterization of strongly badly approximable matrices.

\begin{proposition} \label{prop:2}
The $M \times N $ matrix $A$ is strongly badly approximable if and only if there is a constant $C_2 \geq 0$ such that for any nonnegative integers 
$U_1, \ldots, U_N, V_1, \ldots, V_M$, the matrix $\M_{U_1, \ldots, U_N;V_1, \ldots, V_M}(A)$ defined in {\rm (\ref{eq:m})} has rank at least
\begin{equation} \label{eq:c3}
\min \left( \sum_{n=1}^{N} U_n, \sum_{m=1}^{M} V_m \right) - C_2.
\end{equation}
\end{proposition}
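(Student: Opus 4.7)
The plan is to derive Proposition~\ref{prop:2} directly from Proposition~\ref{prop:1} via the rank--nullity theorem, since the matrix $\M_{U_1,\ldots,U_N;V_1,\ldots,V_M}(A)$ has been set up precisely so that $\V(U_1,\ldots,U_N;V_1,\ldots,V_M)$ is realized as its kernel.

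\textbf{Step 1: Identify $\V$ with a kernel.}
First I would make explicit the correspondence between polynomial tuples $(\xi_1,\ldots,\xi_N)$ with $|\xi_n|<e^{U_n}$ and coefficient vectors $\mathbf{t}=(t_j^{(n)})_{n,\,0\le j<U_n}\in K^{U_1+\cdots+U_N}$ via (\ref{eq:xin}). Under this identification, the Laurent expansion of $\sum_n \alpha_{mn}\xi_n \bmod K[x]$ has coefficient of $x^{-k}$ (for $k\ge 1$) equal to $\sum_{n}\sum_{j=0}^{U_n-1} t_j^{(n)} a_{k+j}^{(mn)}$. The condition $\bigl\|\sum_n \alpha_{mn}\xi_n\bigr\|<e^{-V_m}$ is exactly the vanishing of these coefficients for $k=1,\ldots,V_m$. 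Consequently
\begin{equation*}
\V(U_1,\ldots,U_N;V_1,\ldots,V_M) \;=\; \ker \M_{U_1,\ldots,U_N;V_1,\ldots,V_M}(A),
\end{equation*}
where the matrix acts on coefficient vectors on the right (its block structure in (\ref{eq:m}) is precisely what this accounting produces).

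\textbf{Step 2: Apply rank--nullity.} Writing $S = U_1+\cdots+U_N$ and $T=V_1+\cdots+V_M$, the matrix has $S$ columns, so
\begin{equation*}
\dim \V(U_1,\ldots,U_N;V_1,\ldots,V_M) \;=\; S - \operatorname{rank}\M_{U_1,\ldots,U_N;V_1,\ldots,V_M}(A).
\end{equation*}

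\textbf{Step 3: Translate the inequality.} The rank bound (\ref{eq:c3}) is equivalent (with the same constant $C_2$) to
\begin{equation*}
\dim \V \;\le\; S - \min(S,T) + C_2 \;=\; \max(0,\,S-T) + C_2,
\end{equation*}
using the elementary identity $S-\min(S,T)=\max(0,S-T)$. But this is exactly (\ref{eq:dim2}) of Proposition~\ref{prop:1}. Therefore the rank condition of Proposition~\ref{prop:2} holds for some $C_2$ if and only if the dimension condition of Proposition~\ref{prop:1} holds for the same $C_2$, which in turn (by Proposition~\ref{prop:1}) is equivalent to $A$ being strongly badly approximable.

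There is no real obstacle here beyond the bookkeeping of Step~1; the only thing to verify carefully is that the matrix described in (\ref{eq:m}) indeed encodes the linear system produced by the Laurent-series expansion, and that the number of columns (rather than rows) is $S$, so that rank--nullity produces $\dim\V = S - \operatorname{rank}\M$ and the $\max$--$\min$ identity swaps one characterization for the other cleanly.
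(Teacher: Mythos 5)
Your proposal is correct and follows essentially the same route as the paper: identify $\V(U_1,\ldots,U_N;V_1,\ldots,V_M)$ with the kernel of $\M_{U_1,\ldots,U_N;V_1,\ldots,V_M}(A)$, apply rank--nullity, and use the identity $S-\max(0,S-T)=\min(S,T)$ to convert the dimension bound of Proposition~\ref{prop:1} into the rank bound (\ref{eq:c3}). The paper's proof is just a terser statement of exactly these three ingredients, so your more explicit bookkeeping in Step~1 is a faithful elaboration rather than a different argument.
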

\begin{proof}
This follows from Proposition \ref{prop:1}, the rank-nullity theorem, and the fact that
\begin{equation}
\sum_{n=1}^{N} U_n - \max \left(0, \sum_{n=1}^{N} U_n - \sum_{m=1}^{M} V_m \right) -C_2 =  \min \left(\sum_{n=1}^{N} U_n,  \sum_{m=1}^{M} V_m \right) -C_2. 
\end{equation}
\end{proof}
It is easy to see that the transpose of $\M_{U_1, \ldots, U_N;V_1, \ldots, V_M}(A)$ is $\M_{V_1, \ldots, V_M;U_1, \ldots, U_N}(A^t)$. Thus from Proposition \ref{prop:2}, we immediately have the following transference principle:

\begin{theorem} \label{th:transference}
A matrix $A$ with entries in $\T$ is strongly badly approximable if and only 
if its transpose $A^t$ is strongly badly approximable.
\end{theorem}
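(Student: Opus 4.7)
The plan is to deduce Theorem \ref{th:transference} as an essentially immediate corollary of Proposition \ref{prop:2}, exploiting the fact that the rank of a matrix is invariant under transposition and that the target lower bound in (\ref{eq:c3}) is symmetric in $\sum U_n$ and $\sum V_m$.

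Concretely, I would first verify the assertion already flagged just before the theorem: that
\begin{equation*}
\bigl(\M_{U_1, \ldots, U_N;V_1, \ldots, V_M}(A)\bigr)^{t} = \M_{V_1, \ldots, V_M;U_1, \ldots, U_N}(A^{t}).
\end{equation*}
At the block level this is the statement that transposing the block matrix (\ref{eq:m}) swaps the row/column block indices $(m,n)\leftrightarrow(n,m)$, and that the $(n,m)$ block of $A^{t}$ has entry $\alpha_{mn}$. At the level of an individual Hankel-like block, one checks from the definition of $\M_{U;V}(\alpha)$ that its $(i,j)$-entry is $a_{i+j-1}$, so its transpose is $\M_{V;U}(\alpha)$; this accounts for the swap $U_n\leftrightarrow V_m$ inside each block.

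With this identity in hand, the rest is bookkeeping. By Proposition \ref{prop:2}, $A$ is strongly badly approximable if and only if there is some constant $C_2\geq 0$ such that
\begin{equation*}
\operatorname{rank}\M_{U_1,\ldots,U_N;V_1,\ldots,V_M}(A)\;\geq\;\min\!\Bigl(\sum_{n=1}^{N}U_n,\;\sum_{m=1}^{M}V_m\Bigr)-C_2
\end{equation*}
for all nonnegative integers $U_n,V_m$. Since transposition preserves rank and since $\min(\sum U_n,\sum V_m)=\min(\sum V_m,\sum U_n)$, this condition is word-for-word the rank condition of Proposition \ref{prop:2} applied to $A^{t}$ with the parameter vectors $(V_1,\ldots,V_M)$ and $(U_1,\ldots,U_N)$ swapped. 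As the parameters range over all nonnegative integer tuples in either formulation, the two conditions are equivalent, and this gives the theorem with the same constant $C_2$ for $A$ and $A^{t}$.

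The proof contains no genuine obstacle; the only point that requires a moment of care is the identification of the transpose of the block Hankel matrix in (\ref{eq:m}) with $\M_{V_1,\ldots,V_M;U_1,\ldots,U_N}(A^{t})$, and in particular making sure the block indices and the internal parameters $U_n,V_m$ swap consistently. Everything else reduces to invoking Proposition \ref{prop:2} in both directions.
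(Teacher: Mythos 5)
Your proof is correct and follows exactly the paper's route: the paper likewise observes that the transpose of $\M_{U_1, \ldots, U_N;V_1, \ldots, V_M}(A)$ is $\M_{V_1, \ldots, V_M;U_1, \ldots, U_N}(A^t)$ and then deduces the theorem immediately from Proposition \ref{prop:2} via rank invariance under transposition. Your write-up merely spells out the Hankel-block verification and the symmetry of the $\min$ that the paper leaves implicit.
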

\begin{remark}
Propositions \ref{prop:1}, \ref{prop:2} show that, if $A$ is strongly badly approximable, we can choose $C_1(A^t)=(M+N)^2 + (M+N) C_1(A)$, where $C_1(A)$ is the constant defined in (\ref{eq:def}). One can compare this to the bound (8.30) in \cite{lv}.
\end{remark}

Given Proposition \ref{prop:2}, we will establish the existence of strongly badly approximable matrices when $K$ is infinite. This follows from the following stronger statement.
\begin{theorem} \label{th:existence}
Suppose $K$ is infinite. Then for any $M$ and $N$, there exists an $M \times N$ matrix $A=(\alpha_{mn})$ with entries in $\T$ 
satisfying the following property.
\begin{enumerate}
\item[(*)] 
For any nonnegative integers $U_1, \ldots, U_N$, $V_1, \ldots, V_M$ with $\sum_{n=1}^N U_n = \sum_{m=1}^M V_m$, the square matrix $\M_{U_1, \ldots, U_N;V_1, \ldots, V_M}(A)$ is non-singular.
\end{enumerate}
\end{theorem}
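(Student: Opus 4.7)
The plan is to construct $\alpha_{mn} = \sum_{i \ge 1} a^{(mn)}_i x^{-i}$ coefficient by coefficient, inducting on $i \ge 1$, and at each stage choosing the ``new'' coefficients $\{a^{(mn)}_i\}_{m,n}$ so that all newly-finalized square matrices $\M_{U_1,\ldots,U_N;V_1,\ldots,V_M}(A)$ become nonsingular. Write $\tau = (U_1, \ldots, U_N; V_1, \ldots, V_M)$ for a \emph{balanced tuple} of nonnegative integers satisfying $\sum_n U_n = \sum_m V_m$, and define its \emph{height}
\[
h(\tau) = \max\{U_n + V_m - 1 : U_n V_m > 0\}.
\]
The matrix $\M_\tau(A)$ depends only on coefficients $a^{(mn)}_j$ with $j \le h(\tau)$, and only finitely many balanced tuples have height $\le i$ (each $U_n, V_m \le i+1$). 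So it suffices to ensure that at stage $i$, for every balanced tuple $\tau$ with $h(\tau) = i$, $\det \M_\tau(A) \neq 0$.

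The key claim is: for any balanced tuple $\tau$ with $h(\tau) = i$, assuming inductively that $\det \M_{\tau'}(A) \ne 0$ for every balanced $\tau'$ with $h(\tau') < i$, the expression $\det \M_\tau(A)$ is a nonzero polynomial in the indeterminates $\{a^{(mn)}_i\}_{m,n}$ after the previously fixed values are substituted for coefficients of smaller index. To prove it, let $S = \{(m, n) : U_n + V_m = i+1\}$. Each indeterminate $a^{(mn)}_i$ with $(m,n) \in S$ occurs exactly once in $\M_\tau(A)$, namely at the southeast corner of the block $\M_{U_n;V_m}(\alpha_{mn})$. Hence $\det \M_\tau(A)$ is multilinear in the family $\{y_{m,n} := a^{(mn)}_i\}_{(m,n)\in S}$, and the multilinear expansion of a determinant yields
\[
\det \M_\tau(A) = \sum_{T} \varepsilon_T \Bigl(\prod_{(m,n)\in T} y_{m,n}\Bigr) \det \M_{\tau^T}(A),
\]
summed over subsets $T \subseteq S$ that form a matching (distinct rows and distinct columns), with $\varepsilon_T \in \{\pm 1\}$ and $\tau^T$ the balanced tuple obtained from $\tau$ by decrementing $U_n$ and $V_m$ by one for each $(m, n) \in T$. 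Taking $T$ to be a maximum matching in $S$, its endpoints form a vertex cover of $S$: otherwise an uncovered edge could be appended to enlarge $T$. Hence every $(m, n) \in S$ has $U_n$ or $V_m$ decremented in $\tau^T$, so $h(\tau^T) \le i - 1$, and the inductive hypothesis gives $\det \M_{\tau^T}(A) \ne 0$. The coefficient of $\prod_{(m,n)\in T} y_{m,n}$ in the expansion is therefore nonzero, establishing the claim.

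Given the key claim, the product $\prod_{\tau : h(\tau) = i} \det \M_\tau(A)$ is a nonzero polynomial in the finite set of variables $\{a^{(mn)}_i\}_{m,n}$, and since $K$ is infinite we may specialize each $a^{(mn)}_i$ to a value in $K$ lying outside this product's zero locus. Iterating for $i = 1, 2, \ldots$ produces $A$ with property $(*)$. The main obstacle is the key claim: a naive cofactor expansion at a single top-index entry in $S$ need not reduce the height, because other top-index cells not sharing a row or column with the chosen entry would keep the cofactor tuple's height at $i$. The multilinear expansion over a maximum matching, combined with the K\"onig-style fact that its endpoints form a vertex cover of $S$, is the essential maneuver.
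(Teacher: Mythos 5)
Your construction is correct, but it takes a genuinely different route from the paper. The paper argues by induction on $M+N$: starting from an $M\times N$ matrix with property (*), it adjoins one new column (by symmetry, one new row), and determines the new column's coefficients $\bigl(a^{(m,N+1)}_L\bigr)_{1\le m\le M}$ recursively in the ``order'' $L=\max_m(U_{N+1}+V_m-1)$. Expanding each relevant determinant along the last column, the non-vanishing requirement becomes an affine condition $\sum_m r_m a^{(m,N+1)}_L + r_0\neq 0$ in which at least one $r_m$ is a lower-order determinant, hence nonzero by the recursion; so the paper only ever has to avoid finitely many affine hyperplanes in $K^M$. You instead build all $MN$ series simultaneously, inducting only on the coefficient index $i$, and the price is that the non-vanishing condition is no longer linear: you must show $\det\M_\tau(A)$ is a nonzero polynomial in the stage-$i$ variables, and your maximal-matching/vertex-cover step (delete the rows and columns through a maximal matching $T$ of $S$; maximality forces every remaining top-index cell to lose its row or column, so the complementary minor is exactly $\M_{\tau^T}(A)$ with $h(\tau^T)\le i-1$) is what replaces the paper's outer induction on dimensions. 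Your version makes the row/column symmetry of the construction transparent and dispenses with the column-by-column bookkeeping, at the cost of the combinatorial lemma; the paper's version keeps every step elementary (one cofactor, one linear condition). Two small points to tidy: your displayed ``multilinear expansion'' is not literally correct for non-maximal $T$, since for such $T$ the minor $\M_{\tau^T}(A)$ still contains stage-$i$ variables and the formula would double-count monomials --- but the only instance you use, namely that the coefficient of $\prod_{(m,n)\in T}y_{m,n}$ for a maximal matching $T$ equals $\pm\det\M_{\tau^T}(A)$, is correct precisely because the vertex-cover property removes all surviving top-index cells; and one should note that every $(m,n)\in S$ automatically has $U_n,V_m\ge 1$ (if, say, $V_m=i+1$ then every positive $U_{n'}$ would give a block of height exceeding $i$, forcing the balanced tuple to be zero), so each variable $y_{m,n}$ really does occur at the southeast corner of a nonempty block.
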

For $1 \times N$ matrices, this is a result of Bumby \cite{bumby}. Our argument is a generalization of his.
\begin{proof}
We prove Theorem \ref{th:existence} by induction on $M+N$. When $M=0$ or $N=0$, the theorem is vacuously true. 
Suppose $M, N \geq 0$ and we know already the existence of an $M \times N$ matrix $A'$ satisfying (*). 
We will show how to add one column to $A'$ such that the new $M \times (N+1)$ matrix retains this property. By symmetry, we can also add one row to $A'$. This way, 
we can create $M \times N$ matrices satisfying (*) for any $M$ and $N$. (Note that when $M=N=0$, then this process creates badly approximable elements in $L$.)

Suppose the $M \times N$ matrix $A'=(\alpha_{mn})$ 
satisfies (*). We will find $\alpha_{m, N+1} \in \T$ ($1 \leq m \leq M$) such that the $M \times (N+1)$ matrix $A=(\alpha_{mn})$
satisfies (*).

Suppose 
\begin{eqnarray*}
\alpha_{mn} = \sum_{i=1}^{\infty} a^{(mn)}_i x^{-i} \qquad (1 \leq m \leq M, 1 \leq n \leq N+1). 
\end{eqnarray*}
Our goal is to construct $M$ sequences 
\begin{equation*}
\left( a^{(m,N+1)}_i \right)_{i=1}^{\infty},\qquad   (1 \leq m \leq M), 
\end{equation*}
such that for integers $U_1, \ldots, U_N, U_{N+1}$ and $V_1, V_2, \ldots, V_M$, with 
\begin{equation*}
U_1+ \cdots + U_N + U_{N+1} = V_1 + \cdots + V_M,
\end{equation*}
the square matrix
\begin{align*}
\M_{U_1, \ldots, U_N, U_{N+1};V_1, V_2, \ldots, V_M}&(A)\\
&= \begin{pmatrix}
a^{(11)}_1 	& \cdots 	& a^{(11)}_{U_1}   	& \cdots   		& a^{(N+1,1)}_1   & \cdots 	& a^{(N+1,1)}_{U_{N+1}}\\
\vdots  	& \ddots 	& \vdots     	   	&   			& \vdots	   & \ddots	& \vdots               \\
a^{(11)}_{V_1}  & \cdots 	& a^{(11)}_{U_1+V_1-1} & \cdots			& a^{(N+1,1)}_{V_1}  & \cdots 	& a^{(N+1,1)}_{U_{N+1} + V_1 -1}	\\
\vdots 		& \vdots 	& \vdots 		& 	 		& \vdots	   & \vdots 	& \vdots \\ 
a^{(M1)}_1 	 & \cdots 	& a^{(M1)}_{U_1}       & \cdots			& a^{(N+1,M)}_1  & \cdots & a^{(N+1,M)}_{U_{N+1}}\\
\vdots  	& \ddots 	& \vdots        	& 	     		&\vdots	 	& \ddots 		& \vdots \\
a^{(M1)}_{V_M}  & \cdots 	& a^{(M1)}_{U_1+V_M-1} & \cdots 		& a^{(N+1,M)}_{V_M} & \cdots & a^{(N+1,M)}_{V_M+ U_{N+1}-1}
\end{pmatrix}
\end{align*}
is non-singular.

To this end, we will construct the $M$-tuples $\left( a^{(m,N+1)}_L \right)_{1 \leq m \leq M}$ indexed by $L$ recursively. Let us refer to to 
quantity $\max_{1 \leq m \leq M} (U_{N+1}+V_m-1)$ as the \textit{order} of the matrix 
\begin{equation*}
\M_{U_1, \ldots, U_N, U_{N+1};V_1, V_2, \ldots, V_M}(A).
\end{equation*}
Suppose all the tuples $\left( a^{(m,N+1)}_{\ell} \right)_{1 \leq m \leq M}$, with $1 \leq \ell \leq L-1$, are already determined in such a way that all matrices of order smaller than $L$ are non-singular. 
We want to find $\left( a^{(m,N+1)}_L \right)_{1 \leq m \leq M}$ such that all the matrices
$\M_{U_1, \ldots, U_N, U_{N+1};V_1, V_2, \ldots, V_M}(A)$ satisfying 
\begin{itemize}
\item $U_1 + \cdots + U_{N+1} = V_1 + \cdots + V_M$
\item $\max_{1 \leq m \leq M} (U_{N+1}+V_m-1) = L$
\end{itemize}
have non-zero determinant.

It is clear that the number of such matrices is finite. For any such matrix, by expanding along the last column, the non-zero determinant condition corresponds to an equation of the form
\begin{equation} \label{eq:det}
\sum_{m=1}^M r_{m} a^{(m,N+1)}_L + r_0 \neq 0 
\end{equation}
where $r_0, \ldots, r_M \in K$. For each $1 \leq m \leq M$, $r_m$ is either 0 or the determinant of a matrix of lower order (it is 0 if $a^{(m, N+1)}_L$ is not present in $\M_{U_1, \ldots, U_N, U_{N+1};V_1, V_2, \ldots, V_M}(A)$), but at least one of them is 
nonzero. The number of equations of type (\ref{eq:det}) is finite. Since $K$ is infinite, a choice of $\left( a^{(m,N+1)}_L \right)_{1 \leq m \leq M}$ is certainly possible.
\end{proof}

\section{Explicit examples of strongly badly approximable matrices}
In this section, we discuss about explicit examples of strongly badly approximable matrices. 
As mentioned earlier, Baker \cite{baker} gave explicit counterexamples to Littlewood's conjecture in $K$ in the case where $K$ has characteristic zero. 
He also pointed out that the same method can be used to show that any $1 \times N$ matrix of the form 
\[
 A = \begin{pmatrix}
e^{\lambda_1/x} 	& e^{\lambda_2/x} 	& \cdots &  e^{\lambda_N/x}
\end{pmatrix},
\]
where $\lambda_1, \ldots, \lambda_N$ are distinct elements of $K$, is (in our language) strongly badly approximable. Here for any $\lambda \in K$, 
$e^{\lambda/x}$ is the formal power series
\[
 e^{\lambda/x} = \sum_{i=0}^{\infty} \frac{\lambda^{i}}{i!} x^{-i}.
\]

In a different context, Jager \cite{jager} studied the notion of \textit{perfect systems} of power series (see the definition in \cite[p. 196]{jager}). 
This notion was developped by Mahler \cite{mahler} and inspired by Hermite's proof of the transcendence of $e$. 
By examining the underlying linear equations, it is easy to see that if the system 
$(f_1(x), \ldots, f_M(x))$ is perfect (where each $f_i$ is a power series of the form $f_i(x)=\sum_{j=0}^{\infty} a^{(i)}_j x^i$), then the matrix
\[
 A = \begin{pmatrix}
f_1\left(x^{-1} \right) \\
f_2\left(x^{-1} \right) \\
\vdots \\
f_M\left(x^{-1} \right)
\end{pmatrix}
\]
is strongly badly approximable.

Jager then gave some examples of perfect systems. If $\lambda_1, \ldots, \lambda_N$ are distinct elements of $K$, then the system 
$(e^{\lambda_1 x}, e^{\lambda_2 x}, \ldots, 	e^{\lambda_N x})$ is perfect. Coupled with the transference principle, 
this recovers Baker's result (with a simpler proof). He also showed that if $w_1, \ldots, w_N$ are elements of $K$, no two of which differ by an integer, 
then the system $\left( (1-x)^{w_1}, \ldots, (1-x)^{w_N} \right)$ is also perfect. Here $(1-x)^w$ is the formal power series
\[
 (1-x)^{w} = \sum_{i=0}^{\infty} (-1)^i \binom{w}{i} x^i.
\]
This gives another example of $1 \times N$ (hence $N \times 1$) strongly badly approximable matrices. However, it seems to us that 
neither Baker nor Jager's method can be extended to give explicit examples of $M \times N$ for \textit{some} $M, N >1$. It is therefore an interesting problem to 
give explicit examples of strongly badly approximable matrices of arbitrary dimensions.

\end{document}